\newtheorem{theorem}{Theorem}[section]
\newtheorem{theorem A}{Theorem A}
\newtheorem{theorem B}{Theorem B}
\newtheorem{lemma}[theorem]{Lemma}
\newtheorem{definition}[theorem]{Definition}
\newtheorem{proposition}[theorem]{Proposition}
\begin{document}
\authors
\title{On the Schur multipliers of Lie superalgebras of maximal class}
\author[Z. Araghi Rostami]{Zeinab Araghi Rostami}
\author[P. Niroomand]{Peyman Niroomand}

\address{Department of Pure Mathematics\\
Ferdowsi University of Mashhad, Mashhad, Iran}
\email{araghirostami@gmail.com, zeinabaraghirostami@alumni.um.ac.ir}
\address{School of Mathematics and Computer Science\\
Damghan University, Damghan, Iran}
\email{niroomand@du.ac.ir, p$\_$niroomand@yahoo.com}

\address{Department of Mathematics, Ferdowsi University of Mashhad, Mashhad, Iran}
\keywords{Schur multiplier, Nilpotent Lie superalgebra, Maximal class.}
\thanks{\textit{Mathematics Subject Classification 2010.} 17B01, 17B05, 17B30, 19C09.}

\maketitle
\begin{abstract}

We categorize all non-abelian nilpotent Lie superalgebras of dimension $(m|n)$, where $1\leq s(L)\leq 10$, and $s(L)$ is a non-negative integer defined by Nayak. Furthermore, we classify the structure of all Lie superalgebras of dimension
at most five such that $\dim{L^2}=\dim\mathcal{M}(L)$.

\end{abstract}

\section{Introduction and preliminaries}
The theory of Lie superalgebras, first introduced by Kac in \cite{7}, has become a central framework in both theoretical physics and advanced algebra due to its profound connections to Lie groups and symmetry principles. This area has facilitated the study of supersymmetry, uniting bosonic and fermionic fields, and has provided a rich foundation for understanding algebraic structures in higher dimensions. Lie superalgebras have also been instrumental in addressing representation and classification problems, with many significant results emerging from this framework. These results, which expand upon classical Lie algebra theory, are surveyed comprehensively in \cite{1,4,7,7',8,9,10,13}.

One of the key characteristics of a Lie superalgebra is captured by examining the dimension of its Schur multiplier, an invariant that reveals critical structural information about nilpotent Lie algebras. For a non-zero integer \( t(L) \), the Schur multiplier's dimension is given by 
\[
\frac{1}{2}n(n-1) - t(L).
\] 
This invariant plays a pivotal role in characterizing nilpotent Lie algebras, with many authors having investigated this method for values of \( t(L) \) ranging from \( 0 \) to \( 8 \) as shown in \cite{2,5,6}. For Lie algebras of maximal class, classifications have been achieved for \( 0 \leq t(L) \leq 16 \), underscoring the depth and scope of these structural studies.

In addition to the Schur multiplier, another critical invariant in the study of Lie algebras is \( s(L) \), which measures the difference between the number of generators and relations. For a non-abelian nilpotent Lie algebra \( L \), \( s(L) \) is defined by
\[
s(L) = \frac{1}{2}(n-1)(n-2) + 1 - \dim \mathcal{M}(L),
\]
where \( s(L) > 0 \). This parameter has been pivotal in constructing the classifications of all nilpotent Lie algebras with values of \( s(L) \) between \( 0 \) and \( 3 \) (see \cite{11,12,14}). Additionally, in \cite{15}, the structure of Lie algebras of maximal class has been detailed for values of \( s(L) \) up to \( 15 \).

Extending these ideas, Nayak \cite{9} recently introduced bounds on the dimension of the Schur multipliers for Lie superalgebras, revealing that these multipliers function similarly to those of Lie algebras in characterizing finite-dimensional nilpotent structures. In this paper, we build on Nayak’s work by characterizing the structure of all maximal class Lie superalgebras \( L \) for \( 1 \leq s(L) \leq 10 \). Moreover, drawing parallels with the work in \cite{15}, we present the structure of all nilpotent Lie superalgebras of maximal class \( L \) where \( \dim \mathcal{M}(L) = \dim L^2 \). These results not only deepen the classification theory for Lie superalgebras but also provide tools to explore more complex structures that arise in higher-dimensional spaces and in applications to physics, particularly in supersymmetric theories.

All modules and algebras discussed in this paper are defined over a commutative ring \( \mathbb{K} \) with unity, ensuring a broad applicability to various algebraic settings. For consistency, we adhere to standard notations and definitions in Lie superalgebra theory, as outlined in \cite{4}. Let \( \mathbb{Z}_2 = \{ \bar{0}, \bar{1} \} \) be the grading field, where \( (-1)^{\bar{0}} = 1 \) and \( (-1)^{\bar{1}} = -1 \).

In this context, a \( \mathbb{Z}_2 \)-graded module (or supermodule) \( M \) decomposes into even and odd components, 
\[
M = M_{\bar{0}} \oplus M_{\bar{1}},
\]
with elements in \( M_{\bar{0}} \) and \( M_{\bar{1}} \) termed even and odd, respectively. Non-zero elements in \( M_{\bar{0}} \cup M_{\bar{1}} \) are termed homogeneous. For a homogeneous element \( m \in M_{\bar{\alpha}} \) with \( \alpha \in \mathbb{Z}_2 \), the degree of \( m \), denoted \( |m| = \bar{\alpha} \), signifies its parity. Thus, the notation \( |m| \) implies that \( m \) is a homogeneous element. A submodule \( N \subset M \) is called a \( \mathbb{Z}_2 \)-graded submodule (or subsupermodule) if it splits as 
$N = N_{\bar{0}} \oplus N_{\bar{1}},$
where \( N_{\bar{0}} = N \cap M_{\bar{0}} \) and \( N_{\bar{1}} = N \cap M_{\bar{1}} \).

\begin{definition}\cite[Definition 2.1]{4}\label{d2.1}
A Lie superalgebra is a superalgebra \( M = M_{\bar{0}} \oplus M_{\bar{1}} \) with a multiplication denoted by \( [ \cdot , \cdot ] \), known as the super bracket operation, which satisfies the following identities:

\begin{enumerate}
\renewcommand{\labelenumi}{(\roman{enumi})}
\item \( [x,y] = -(-1)^{|x||y|}[y,x] \),
\item \( [x,[y,z]] = [[x,y],z] + (-1)^{|x||y|}[y,[x,z]] \),
\item \( [m_{\bar{0}}, m_{\bar{0}}] = 0 \),
\end{enumerate}

for all homogeneous elements \( x, y, z \in M \) and \( m_{\bar{0}} \in M_{\bar{0}} \).
\end{definition}

\noindent It is noteworthy that the third equation follows from the first equation under the assumption that \( 2 \) is invertible in \( \mathbb{K} \). The second equation is equivalent to the graded Jacobi identity:
\[
(-1)^{|x||z|}[x,[y,z]] + (-1)^{|y||x|}[y,[z,x]] + (-1)^{|z||y|}[z,[x,y]] = 0.
\]

Applying these identities shows that for a Lie superalgebra \( M = M_{\bar{0}} \oplus M_{\bar{1}} \), the even part \( M_{\bar{0}} \) forms a Lie algebra, while the odd part \( M_{\bar{1}} \) is an \( M_{\bar{0}} \)-module. Consequently, if \( M_{\bar{1}} = 0 \), then \( M \) is a Lie algebra, whereas if \( M_{\bar{0}} = 0 \), \( M \) is an abelian Lie superalgebra, meaning \( [x,y] = 0 \) for all \( x, y \in M \). However, a general Lie superalgebra is not necessarily a Lie algebra.

A sub-superalgebra of \( L \) is a \( \mathbb{Z}_2 \)-graded vector subspace that is closed under the bracket operation. Taking the commutator of \( L \) with itself produces the graded subalgebra \( [L, L] \), denoted by \( L^2 \). A \( \mathbb{Z}_2 \)-graded subspace \( I \) is a graded ideal of \( L \) if \( [I, L] \subseteq I \). For any \( x \in L \), the ideal \( Z(L) = \{ z \in L \, | \, [z, x] = 0 \} \) is a graded ideal known as the center of \( L \). If \( I \) is an ideal of \( L \), the quotient Lie superalgebra \( L/I \) inherits a canonical Lie superalgebra structure, making the natural projection map a homomorphism. The notions of epimorphisms, isomorphisms, and automorphisms carry their usual meanings.

For a Lie superalgebra \( L = L_{\bar{0}} \oplus L_{\bar{1}} \) over a field, we describe it as an \( (m, n) \) Lie superalgebra if \( \dim L_{\bar{0}} = m \) and \( \dim L_{\bar{1}} = n \). Throughout this paper, \( A(m|n) \) denotes an abelian Lie superalgebra with dimension \( (m | n) \). The descending central sequence of a Lie superalgebra \( L \) is defined by \( L^{1} = L \) and \( L^{c+1} = [L^c, L] \) for all \( c \geq 1 \). If for some positive integer \( c \), \( L^{c+1} = 0 \) but \( L^c \neq 0 \), then \( L \) is called nilpotent with nilpotency class \( c \). Additionally, \( |[m, n]| = |m| + |n| \).

A homomorphism between superspaces \( f : V \to W \) of degree \( |f| \in \mathbb{Z}_2 \) is a linear map that satisfies \( f(V_{\bar{\alpha}}) \subseteq W_{\bar{\alpha} + |f|} \) for \( \alpha \in \mathbb{Z}_2 \). In particular, if \( |f| = \bar{0} \), then \( f \) is called a homogeneous linear map of even degree. A Lie superalgebra homomorphism \( f: M \to N \) is a homogeneous linear map of even degree such that \( f([x, y]) = [f(x), f(y)] \) for all \( x, y \in L \).
\\
\\
The following results relate to the Schur multiplier of a Lie superalgebra \( L \).

\begin{theorem}[11, Theorem 3.3]\label{t1.8}
Let \( L \) be a Lie superalgebra with \( \dim{L} = (m | n) \). Then \( \dim{\mathcal{M}(L)} \leq \frac{1}{2}[(n + m)^2 + (n - m)] \).
\end{theorem}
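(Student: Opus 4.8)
The plan is to recognize the right‑hand side as $\dim\mathcal{M}(A(m|n))$ and then to prove that, among all Lie superalgebras of superdimension $(m|n)$, the abelian one has the largest Schur multiplier. For the first step, take a free presentation $0\to R\to\mathcal{F}\to A(m|n)\to 0$ with $\mathcal{F}$ the free Lie superalgebra on a super vector space $V$ of dimension $(m|n)$; since the quotient is abelian one may take $R=\mathcal{F}^{2}$, whence $\mathcal{M}(A(m|n))\cong\mathcal{F}^{2}/[\mathcal{F}^{2},\mathcal{F}]=\mathcal{F}^{2}/\mathcal{F}^{3}$ is exactly the degree‑two homogeneous component of $\mathcal{F}$. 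That component is the quotient of $V\otimes V$ by the super‑antisymmetry relations $[x,y]+(-1)^{|x||y|}[y,x]=0$: on the even–even part this yields $\wedge^{2}V_{\bar{0}}$, on the even–odd part a copy of $V_{\bar{0}}\otimes V_{\bar{1}}$, and on the odd–odd part the \emph{symmetric} square $S^{2}V_{\bar{1}}$ (there the sign flips, forcing $[x,y]=[y,x]$ for odd $x,y$). Hence $\dim\mathcal{M}(A(m|n))=\binom{m}{2}+mn+\binom{n+1}{2}=\tfrac{1}{2}\bigl[(m+n)^{2}+(n-m)\bigr]$, so the inequality holds with equality when $L$ is abelian, and this serves as the base case.

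For non‑abelian $L$ I would induct on $\dim L^{2}$. When $L$ is nilpotent and non‑abelian, $Z(L)\cap L^{2}$ is a nonzero $\mathbb{Z}_{2}$‑graded subspace, so it contains a one‑dimensional graded ideal $I$; then $I$ is central, $I\subseteq L^{2}$, and $L/I$ has superdimension $(m-1|n)$ or $(m|n-1)$ according to the parity of $I$. The Ganea‑type six‑term exact sequence of the central extension $0\to I\to L\to L/I\to 0$, namely $(L/I)^{\mathrm{ab}}\otimes I\to\mathcal{M}(L)\to\mathcal{M}(L/I)\to I\to(L/I)^{\mathrm{ab}}\to L^{\mathrm{ab}}\to 0$, has its last arrow an isomorphism because $I\subseteq L^{2}$, so its initial portion $(L/L^{2})\otimes I\to\mathcal{M}(L)\to\mathcal{M}(L/I)$ is exact and therefore $\dim\mathcal{M}(L)\le\dim(L/L^{2})+\dim\mathcal{M}(L/I)$. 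Here $\dim(L/L^{2})\le m+n-1$ since $\dim L^{2}\ge 1$, and by the inductive hypothesis $\dim\mathcal{M}(L/I)\le\dim\mathcal{M}(A(m-1|n))$ or $\dim\mathcal{M}(A(m|n-1))$. Combining these with the elementary identities $\dim\mathcal{M}(A(m|n))-\dim\mathcal{M}(A(m-1|n))=m+n-1$ and $\dim\mathcal{M}(A(m|n))-\dim\mathcal{M}(A(m|n-1))=m+n$, one checks in both parities that $\dim(L/L^{2})+\dim\mathcal{M}(L/I)\le\dim\mathcal{M}(A(m|n))$, which closes the induction.

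The point needing the most care is the Ganea/six‑term exact sequence itself: one must either develop enough of the low‑degree Hochschild--Serre spectral sequence for graded central extensions of Lie superalgebras — tracking the signs coming from the super‑Jacobi identity and from $S^{2}$ replacing $\wedge^{2}$ on the odd part — or quote it from the literature. Once it is in place the arithmetic is immediate, though in the odd case the estimate is sharp and leaves no slack. A variant that avoids the spectral sequence, at the cost of heavier bookkeeping, is to take a free presentation $0\to R\to\mathcal{F}\to L\to 0$ on a minimal homogeneous generating set, so that $\mathcal{M}(L)\cong(R\cap\mathcal{F}^{2})/[R,\mathcal{F}]$ and $L^{2}\cong\mathcal{F}^{2}/R$, and then filter by the lower central series of $\mathcal{F}$: the estimate reduces to the observation that every dimension of $L^{2}$ appearing in weight at least two is paid for by a dimension of the super‑exterior square $\mathcal{F}^{2}/\mathcal{F}^{3}$. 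Since only nilpotent $L$ occur in this paper, the inductive argument suffices; treating arbitrary finite‑dimensional $L$ (where $L^{2}$ need not be central) would require replacing the last step by a direct analysis of the five‑term sequence for the ideal $L^{2}$ in $L$, with perfect subquotients handled separately.
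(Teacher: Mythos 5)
The paper offers no proof of this statement --- it is quoted from Nayak's paper on multipliers of nilpotent Lie superalgebras --- so the natural benchmark is the standard one-step argument used there (and used by this paper itself in the proof of Theorem 2.3): $\mathcal{M}(L)$ is the kernel of the commutator map $\tilde{\kappa}\colon L\wedge L\to L^{2}$, and the super exterior square $L\wedge L$ is spanned by the elements $x_{i}\wedge x_{j}$ subject to super-antisymmetry, hence has dimension at most $\binom{m}{2}+mn+\binom{n+1}{2}=\tfrac{1}{2}[(m+n)^{2}+(n-m)]$. This bounds $\dim\mathcal{M}(L)$ immediately, for every $L$, with no induction and no case analysis. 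Your computation of $\dim\mathcal{M}(A(m|n))$ via $\mathcal{F}^{2}/\mathcal{F}^{3}$ is correct and amounts to the equality case of that count.

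The genuine gap in your argument is that it only covers nilpotent $L$ (plus the abelian base case), whereas the theorem is stated for an arbitrary Lie superalgebra of superdimension $(m|n)$. For non-nilpotent $L$ the subspace $Z(L)\cap L^{2}$ can be zero --- already for the two-dimensional non-abelian Lie algebra, or for any simple $L$ --- so there is no one-dimensional central ideal inside $L^{2}$ to quotient by, and your induction never starts. The closing suggestion to ``analyze the five-term sequence for the ideal $L^{2}$'' does not repair this: when $L$ is perfect that sequence reads $\mathcal{M}(L)\to 0\to\cdots$ and gives no upper bound on $\mathcal{M}(L)$ whatsoever. Within the nilpotent case your argument is sound modulo quoting the super version of the Ganea/six-term sequence (which does exist in the literature), and the arithmetic with $f(m,n)-f(m-1,n)=m+n-1$ and $f(m,n)-f(m,n-1)=m+n$ checks out --- though note it is the even case, not the odd one, that is tight. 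But the whole machine is unnecessary: the exterior-square bound above is both shorter and strictly more general.
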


As a consequence, there exists a non-negative integer \( t(L) \) such that
\[
\dim{\mathcal{M}(L)} = \frac{1}{2}[(n + m)^2 + (n - m)] - t(L).
\]

\begin{theorem}[11, Theorem 3.4]\label{t1.9}
Let \( L \) be a Lie superalgebra with \( \dim{L} = (m | n) \). Then
\[
\dim{\mathcal{M}(L)} = \frac{1}{2}[(n + m)^2 + (n - m)]
\]
if and only if \( L \) is abelian.
\end{theorem}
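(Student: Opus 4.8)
The plan is to prove the two implications separately: the direction $\Leftarrow$ is a direct computation with a free presentation, while $\Rightarrow$ follows by sharpening the inequality of Theorem~\ref{t1.8}. For $\Leftarrow$, let $L=A(m|n)$ be abelian with a homogeneous basis $x_1,\dots,x_m$ of $L_{\bar 0}$ and $y_1,\dots,y_n$ of $L_{\bar 1}$. Let $F$ be the free Lie superalgebra on this set, with the evident $\mathbb{Z}_2$-grading, and let $\pi\colon F\to L$ be the morphism sending the generators to the basis elements. Then $\pi$ is an isomorphism on abelianizations, so $R:=\ker\pi=F^2$; hence $R\cap F^2=F^2$ and $[R,F]=F^3$, and therefore $\mathcal M(L)=(R\cap F^2)/[R,F]=F^2/F^3$ is the degree-$2$ homogeneous component of the free Lie superalgebra. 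By the identities of Definition~\ref{d2.1} that component is spanned, subject to no further relations, by the brackets $[x_i,x_j]$ with $i<j$ (even--even brackets being alternating, by~(iii)), the brackets $[x_i,y_j]$, and the brackets $[y_j,y_k]$ with $j\le k$ (odd--odd brackets being \emph{symmetric}). Counting,
\[
\dim\mathcal M(L)=\binom m2+mn+\binom{n+1}2=\tfrac12\big[(m+n)^2+(n-m)\big],
\]
so the bound of Theorem~\ref{t1.8} is attained.

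For $\Rightarrow$, I would first establish the refinement $\dim\mathcal M(L)\le \tfrac12[(m+n)^2+(n-m)]-\dim L^2$ for every finite-dimensional Lie superalgebra $L$; granting it, the conclusion is immediate, since attaining the bound of Theorem~\ref{t1.8} then forces $\dim L^2=0$, i.e. $L$ is abelian. To prove the refinement, use the low-degree part of the super Chevalley--Eilenberg (Koszul) chain complex $\cdots\to C_3\xrightarrow{d_3}C_2\xrightarrow{d_2}C_1=L$, in which $C_2=\Lambda^2 L_{\bar 0}\oplus(L_{\bar 0}\otimes L_{\bar 1})\oplus\mathrm{Sym}^2 L_{\bar 1}$ has dimension exactly $\tfrac12[(m+n)^2+(n-m)]$ (the same count as above) and $d_2$ is the super bracket, so $\operatorname{im}d_2=[L,L]=L^2$. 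Since $\mathcal M(L)\cong H_2(L;\mathbb{K})=\ker d_2/\operatorname{im}d_3$, we get $\dim\mathcal M(L)=\dim C_2-\dim L^2-\dim\operatorname{im}d_3\le \dim C_2-\dim L^2$, which is the desired refinement. (If one prefers to work from the Hopf-type description $\mathcal M(L)=(R\cap F^2)/[R,F]$, the analogous estimate follows from a dimension count in a free presentation on a minimal generating set, comparing $F^2/F^3\cong C_2$ with the image of $R\cap F^2$.)

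The step I expect to be the main obstacle is the sign- and grading-bookkeeping in the super Koszul complex: one must verify that the odd summand of $C_2$ is the \emph{symmetric} square $\mathrm{Sym}^2 L_{\bar 1}$ rather than an exterior square --- this is precisely what produces the asymmetric term $n-m$ in Theorem~\ref{t1.8} --- and that $d_2$ is exactly the super bracket, so that $\operatorname{im}d_2=L^2$ on the nose and the identification $\mathcal M(L)\cong H_2(L;\mathbb{K})$ is legitimate. An alternative, homology-free route for $\Rightarrow$ is available when $L$ is nilpotent: choose a one-dimensional graded central ideal $K\subseteq Z(L)\cap L^2$ (nonzero whenever $L$ is non-abelian nilpotent), apply a super-analogue of the Ganea exact sequence $K\,\widehat\otimes\,(L/L^2)\to\mathcal M(L)\to\mathcal M(L/K)\to K\cap L^2\to 0$, bound $\mathcal M(L/K)$ via Theorem~\ref{t1.8} applied to the quotient of total dimension $m+n-1$, and check that the resulting estimate lies \emph{strictly} below $\tfrac12[(m+n)^2+(n-m)]$; strictness in the case of an even generator $K$ is forced precisely by the inclusion $K\subseteq L^2$. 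The delicate points of that route are establishing the super Ganea sequence with the correct cross-term and carrying out the parity case analysis.
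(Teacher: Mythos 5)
This theorem is not proved in the paper at all: it is imported verbatim from Nayak's article (reference [9], cited here as ``[11, Theorem 3.4]''), so there is no in-paper proof to compare against. Judged on its own, your argument is correct and is essentially the standard one. The backward direction via the Hopf formula is clean: for abelian $L$ a free presentation on a basis gives $R=F^2$, hence $\mathcal M(L)\cong F^2/F^3$, and the count $\binom{m}{2}+mn+\binom{n+1}{2}=\tfrac12[(m+n)^2+(n-m)]$ is exactly right (antisymmetric on $L_{\bar 0}$, symmetric on $L_{\bar 1}$, which is where the $n-m$ comes from). The forward direction correctly reduces to the refinement $\dim\mathcal M(L)+\dim L^2\le\tfrac12[(m+n)^2+(n-m)]$, and your derivation of it from $\mathcal M(L)\cong H_2(L;\Bbb K)=\ker d_2/\operatorname{im}d_3$ with $C_2=\Lambda^2L_{\bar 0}\oplus(L_{\bar 0}\otimes L_{\bar 1})\oplus \mathrm{Sym}^2L_{\bar 1}$ and $\operatorname{im}d_2=L^2$ is valid. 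The only external inputs you lean on --- the identification of the multiplier defined by the Hopf formula with second homology, and the form of the degree-two term of the super Chevalley--Eilenberg complex --- are established in the references the paper already cites ([4], [9]), so they are fair to assume; you are right to flag them as the places where the sign and parity bookkeeping must be checked. One caution about your ``alternative, homology-free route'': it requires a nonzero graded ideal inside $Z(L)\cap L^2$, which exists for non-abelian nilpotent $L$ but not in general (e.g.\ it fails for simple $L_{\bar 0}$), so it cannot replace the homological argument for the theorem as stated, which concerns arbitrary finite-dimensional Lie superalgebras.
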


Thus, \( t(L) = 0 \) if and only if \( L \) is abelian.

\begin{theorem}[11, Theorem 5.1]\label{t1.12}
Let \( L = L_{\bar{0}} \oplus L_{\bar{1}} \) be a nilpotent Lie superalgebra with \( \dim{L} = (m | n) \) and \( \dim{L^2} = r + s \) where \( r + s \geq 1 \). Then
\[
\dim{\mathcal{M}(L)} \leq \frac{1}{2}[(m + n + r + s - 2)(m + n - r - s - 1)] + n + 1.
\]
\end{theorem}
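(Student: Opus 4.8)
The plan is to prove the inequality by induction on $\dim L^{2}=r+s$, the engine being the Lie-superalgebra analogue of Ganea's exact sequence: for a one-dimensional graded central ideal $N$ of $L$ one has an exact sequence
$$\bigl(L/L^{2}\bigr)\otimes N\longrightarrow \mathcal{M}(L)\longrightarrow \mathcal{M}(L/N)\longrightarrow N\cap L^{2}\longrightarrow 0,$$
hence $\dim\mathcal{M}(L)\le \dim\mathcal{M}(L/N)+\dim\bigl((L/L^{2})\otimes N\bigr)-\dim(N\cap L^{2})$. Since $L$ is nilpotent and non-abelian, the last nonzero term of its lower central series is a nonzero graded subspace of $Z(L)\cap L^{2}$; choosing for $N$ a one-dimensional graded subspace there produces a graded central ideal with $N\subseteq L^{2}$, so that $N\cap L^{2}=N$ has dimension $1$ and $\dim\bigl((L/L^{2})\otimes N\bigr)=\dim(L/L^{2})=m+n-r-s$. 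Thus $\dim\mathcal{M}(L)\le \dim\mathcal{M}(L/N)+(m+n-r-s)-1$.

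For the inductive step assume $r+s\ge 2$ and set $\bar L=L/N$. Then $\bar L$ is nilpotent and non-abelian, of total dimension $m+n-1$, with $\dim\bar L^{2}=r+s-1\ge 1$ and odd dimension $n'\in\{n-1,n\}$ (equal to $n$ when $N$ is even, to $n-1$ when $N$ is odd). Applying the induction hypothesis to $\bar L$ and substituting — the resulting quadratic term is smaller than the target one by exactly $m+n-r-s-1$ — a direct computation gives
$$\dim\mathcal{M}(L)\le \tfrac12(m+n+r+s-2)(m+n-r-s-1)+n'+1\le \tfrac12(m+n+r+s-2)(m+n-r-s-1)+n+1,$$
which is the claimed bound (with equality in the last step precisely when $N$ is even).

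It remains to handle the base case $r+s=1$, where $L^{2}$ is one-dimensional, necessarily central, so $N=L^{2}$ and $\bar L=L/L^{2}$ is abelian: the inductive mechanism degenerates and one must argue directly. Here the bracket $[x,y]=\phi(x,y)\,w$ (with $L^{2}=\Bbb{K}w$, $w$ homogeneous) is governed by a bilinear form $\phi$ whose radical is $Z(L)$; choosing a graded complement $U$ of $Z(L)$ one obtains $L\cong H\oplus A$ with $A$ abelian and $H=U\oplus\Bbb{K}w$ a Heisenberg-type Lie superalgebra satisfying $\dim H^{2}=1$ and $Z(H)=H^{2}$ — the form $\phi|_{U}$ being non-degenerate, alternating on $U_{\bar 0}$ and symmetric on $U_{\bar 1}$ when $w$ is even, and a perfect $U_{\bar 0}$–$U_{\bar 1}$ pairing when $w$ is odd. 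Using the decomposition formula $\dim\mathcal{M}(H\oplus A)=\dim\mathcal{M}(H)+\dim\mathcal{M}(A)+\dim(H/H^{2})\cdot\dim(A/A^{2})$, the abelian value $\dim\mathcal{M}(A(m|n))=\tfrac12[(m+n)^{2}+(n-m)]$ (Theorems~\ref{t1.8} and~\ref{t1.9}), and the known Schur multipliers of the (few families of) Heisenberg superalgebras, the inequality is verified by direct computation and is seen to be sharp.

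I expect this base case to be the main obstacle: it rests on the structural classification of nilpotent Lie superalgebras with one-dimensional derived subalgebra together with explicit formulas for $\dim\mathcal{M}$ of the Heisenberg superalgebras, including the small sporadic cases in which the generic formula must be corrected. By contrast, once the Ganea-type sequence and the direct-sum formula for $\mathcal{M}$ are available, the inductive step is purely arithmetic, as indicated above; the mild loss coming from odd central reductions ($n'=n-1$) is harmless, so the choice of parity of $N$ is immaterial.
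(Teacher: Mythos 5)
This theorem is quoted by the paper from its source (the tag ``[11, Theorem 5.1]'', i.e.\ Nayak's work on multipliers of nilpotent Lie superalgebras) and no proof of it appears in the present text, so your proposal can only be measured against that source. Your strategy is in fact the same one used there, and before that by Niroomand--Russo in the Lie algebra case: induction on $\dim L^2$ driven by the Ganea-type exact sequence $(L/L^{2})\otimes N\to\mathcal{M}(L)\to\mathcal{M}(L/N)\to N\cap L^{2}\to 0$ for a one-dimensional homogeneous central ideal $N\subseteq L^{2}$ (which exists: take a homogeneous element of the last nonzero term of the lower central series). Your inductive step is arithmetically correct: the quadratic part of the target exceeds the quadratic part of the bound for $L/N$ by exactly $m+n-r-s-1$, which is precisely the $\dim\bigl((L/L^{2})\otimes N\bigr)-\dim(N\cap L^{2})=m+n-r-s-1$ that the sequence contributes, and the possible drop from $n$ to $n'=n-1$ when $N$ is odd only helps.

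The genuine gap --- which you flag yourself --- is the base case $r+s=1$, and it is not a routine verification: it is where the specific shape of the bound (the additive $n+1$ rather than the symmetric abelian bound of Theorem~\ref{t1.8}) is actually created. To close it you need (i) the graded analogue of the decomposition $L\cong H\oplus A$ with $H$ a special Heisenberg Lie superalgebra, which splits into two genuinely different families according to whether $L^{2}$ is even or odd (in the odd case $\phi$ is a perfect pairing between $U_{\bar 0}$ and $U_{\bar 1}$, as you note); (ii) explicit values of $\dim\mathcal{M}$ for both Heisenberg families, including the low-dimensional exceptional members where the generic formula must be corrected; and (iii) the superalgebra version of $\dim\mathcal{M}(H\oplus A)=\dim\mathcal{M}(H)+\dim\mathcal{M}(A)+\dim(H/H^{2})\cdot\dim(A/A^{2})$. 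None of these is actually carried out in the proposal, and each requires proof in the super setting. As a consistency check that your scheme does reproduce the theorem: for $L\cong H(1,0)\oplus A(m-3\,|\,n)$ the three ingredients give $2+\tfrac12[(m+n-3)^{2}+(n-m+3)]+2(m+n-3)=\tfrac12(m+n-1)(m+n-2)+n+1$, i.e.\ equality in the claimed bound, matching the remark following the theorem. So the architecture is the right one, but as written the argument is incomplete exactly at its load-bearing point.
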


If \( r + s = 1 \), then equality holds if and only if \( L \cong H(1, 0) \oplus A(m - 3 | n) \), where \( A(m - 3 | n) \) is an abelian Lie superalgebra of dimension \( (m - 3 | n) \) and \( H(1, 0) \) is a special Heisenberg Lie superalgebra of dimension \( (3 | 0) \). Define \( s(L) \) as
\[
s(L) = \frac{1}{2}(m + n - 2)(m + n - 1) + n + 1 - \dim{\mathcal{M}(L)}.
\]
Then, \( \dim{\mathcal{M}(L)} = \frac{1}{2}(m + n - 2)(m + n - 1) + n + 1 - s(L) \) with \( s(L) \geq 0 \). Moreover, \( t(L) = m + n - 2 + s(L) \), suggesting that the classification of nilpotent Lie superalgebras \( L \) by \( s(L) \) correlates to a classification in terms of \( t(L) \).

\begin{theorem}\label{t1.14}
If \( L \) is a finite-dimensional nilpotent Lie superalgebra of dimension greater than \( 1 \) and class \( (p, q) \), then \( \mathcal{M}(L) \neq 0 \).
\end{theorem}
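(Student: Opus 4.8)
The plan is to use the Hopf-type formula for the Schur multiplier and to reduce the statement to the structure of free Lie superalgebras. First I would fix a minimal homogeneous generating set of $L$ and write $L\cong F/R$, where $F$ is the free Lie superalgebra on that set; minimality of the generating set means that the free generators descend to a homogeneous basis of $L/L^{2}$, so $R\subseteq F^{2}=[F,F]$, and then $\mathcal{M}(L)\cong(R\cap F^{2})/[F,R]=R/[F,R]$. Hence it is enough to show $R\neq[F,R]$.

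I would then argue by contradiction: suppose $R=[F,R]$. Since $R\subseteq F^{2}$ and $[F,F^{k}]=F^{k+1}$, an induction on $k$ gives $R=[F,R]\subseteq F^{k}$ for every $k\geq 1$, so $R\subseteq\bigcap_{k\geq 1}F^{k}$. Because a free Lie superalgebra is graded by bracket length, $F=\bigoplus_{\ell\geq 1}F_{(\ell)}$ with $F^{k}=\bigoplus_{\ell\geq k}F_{(\ell)}$, one has $\bigcap_{k}F^{k}=0$; therefore $R=0$ and $L\cong F$ is itself free. It then remains to check that a finite-dimensional free Lie superalgebra cannot meet the hypothesis: if $F$ had at least two homogeneous free generators, the super-analogue of Witt's formula would force every $F_{(\ell)}$ to be nonzero and $F$ to be infinite-dimensional; so $F$ is generated by a single homogeneous element, and a short analysis of the lower central series (using that $2$ is invertible in $\Bbb{K}$, so that $[[[x,x],x],x]=0$) shows $\dim L\leq 3$ — indeed $\dim L\leq 1$ when the generator is even. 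In each case $L$ contradicts the hypothesis, and this proves $\mathcal{M}(L)\neq 0$.

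When $L$ is abelian one can bypass the free presentation entirely: Theorem \ref{t1.9} gives $\dim\mathcal{M}(L)=\frac{1}{2}\bigl[(m+n)^{2}+(n-m)\bigr]\geq\frac{1}{2}(m+n)(m+n-1)$, which is positive as soon as $m+n\geq 2$. So the real content is the non-abelian case handled above.

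I expect the main obstacle to be the two structural inputs on free Lie superalgebras that the argument rests on: residual nilpotence, $\bigcap_{k}F^{k}=0$, which is exactly what turns $R=[F,R]$ into $R=0$, and a precise list of the finitely many finite-dimensional free Lie superalgebras, so that the last step genuinely conflicts with the hypothesis. The delicate boundary case is $L$ generated by a single odd element — of dimension $(1\mid 1)$, and in characteristic $3$ also of dimension $(1\mid 2)$ — and this is precisely the situation that the ``class $(p,q)$'' hypothesis in the statement is there to exclude.
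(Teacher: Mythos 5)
Your route is necessarily different from the paper's, because the paper offers no argument at all for Theorem \ref{t1.14}: its ``proof'' is a bare pointer to an external reference (``[3, Theorem 3.2]'', which as listed concerns Schur multipliers of Lie \emph{algebras}), so there is nothing internal to compare against. Your Hopf-formula argument is the standard and essentially correct way to prove a nonvanishing statement of this kind for Lie superalgebras: presenting $L\cong F/R$ over a minimal homogeneous generating set does give $R\subseteq F^{2}$ and $\mathcal{M}(L)\cong R/[F,R]$; the assumption $\mathcal{M}(L)=0$ then forces $R\subseteq\bigcap_{k}F^{k}=0$ via the bracket-length grading of the free Lie superalgebra, so $L$ would have to be free; and a finite-dimensional free Lie superalgebra must be free on a single homogeneous generator. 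Each of these inputs is available in the superalgebra setting (the Hopf formula and the non-abelian exterior square appear in Garc\'ia-Mart\'inez--Khmaladze--Ladra and in Nayak's paper), and your separate dispatch of the abelian case through Theorem \ref{t1.9} is a harmless shortcut. What your proof buys over the paper's citation is that it makes visible exactly where each hypothesis is used.

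That visibility also exposes the one point you cannot close, and you are right to flag it: the free Lie superalgebra on a single odd generator $\alpha$ (over $\mathbb{R}$, or whenever $2$ and $3$ are invertible in $\mathbb{K}$) is the non-abelian nilpotent Lie superalgebra $\langle a,\alpha\mid[\alpha,\alpha]=a\rangle$ of dimension $(1|1)$; it has dimension $2>1$ and, being free, trivial Schur multiplier. So ``dimension greater than $1$'' alone does not imply the conclusion, and everything hinges on the phrase ``class $(p,q)$'', which the paper never defines. Your proof is therefore complete only modulo the unverified assertion that this hypothesis excludes the one-odd-generator case; as written, the final sentence ``in each case $L$ contradicts the hypothesis'' fails for that case. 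This is a defect of the theorem's statement rather than of your argument, but you should make the needed assumption explicit --- for instance, assume $\dim(L/L^{2})\geq 2$, or note that in every application in this paper (e.g.\ Theorem \ref{t1.15}, where $\dim L>2$ and $\dim L^{2}=m+n-2$) the exceptional $(1|1)$-dimensional superalgebra cannot occur. With any such reading, your argument is sound.
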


\begin{proof}
This result follows from [3, Theorem 3.2].
\end{proof}

\begin{definition}[15, Definition 4.1]\label{d1}
A Lie algebra \( L \) is called capable if \( L \cong H/Z(H) \) for some Lie algebra \( H \).
\end{definition}

We denote $Z^{*}(L)$ to be the smallest graded ideal in $L$ such that $L/Z^{*}(L)$ is capable, (see \cite{12'}).

\begin{lemma} [15, Lemma 4.3] A Lie superalgebra $L$ is capable if and only if $Z^{*}(L)=\{0\}$.
\end{lemma}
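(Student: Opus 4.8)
The plan is to read the lemma off the defining property of $Z^{*}(L)$: by the paragraph preceding it, $Z^{*}(L)$ is the smallest graded ideal of $L$ for which the quotient $L/Z^{*}(L)$ is capable. For the ``if'' direction, suppose $Z^{*}(L)=\{0\}$; then $L/Z^{*}(L)=L/\{0\}$ is canonically isomorphic to $L$, and since $L/Z^{*}(L)$ is capable by the choice of $Z^{*}(L)$, it follows that $L$ is capable. For the ``only if'' direction, suppose $L$ is capable; then the zero ideal lies in the family $\mathcal{F}$ of graded ideals $N$ of $L$ with $L/N$ capable, because $L\cong L/\{0\}$, and since $Z^{*}(L)$ is the $\subseteq$-smallest member of $\mathcal{F}$ we get $Z^{*}(L)\subseteq\{0\}$, hence $Z^{*}(L)=\{0\}$.

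The substance of the statement therefore lies not in these two one-line implications but in the legitimacy of the definition of $Z^{*}(L)$ --- one must know that $\mathcal{F}$ does admit a smallest element. Following \cite{12'} and the Lie-algebra template of \cite{15}, I would establish this by constructing the nonabelian exterior square $L\wedge L$ of the Lie superalgebra together with the commutator map $\kappa\colon L\wedge L\to L^{2}$, and then identifying $Z^{*}(L)$ with the image under $\kappa$ of a suitable Ganea-type subspace of $L\wedge L$. From that description the crucial point falls out: containment $N\subseteq Z^{*}(L)$ is governed by the injectivity of the natural map $\mathcal{M}(L)\to\mathcal{M}(L/N)$, a condition stable under arbitrary intersections of the ideals $N$, so that $\bigcap\mathcal{F}$ is again a member of $\mathcal{F}$ and coincides with $Z^{*}(L)$. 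Granting this, the lemma follows exactly as above.

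The step I expect to be the genuine obstacle is transporting that homological apparatus to the $\mathbb{Z}_{2}$-graded setting. One has to verify that the nonabelian exterior square, the five-term exact homology sequence, and the functoriality of $\mathcal{M}(-)$ along the projection $L\to L/N$ all respect the even/odd decomposition and the sign conventions of Definition \ref{d2.1} --- in particular that nothing goes wrong on the odd part $L_{\bar{1}}$ --- so that the intersection-closedness used above is not spoilt by grading phenomena. Once that bookkeeping is in place (which is precisely the part \cite{12'} carries out), the argument is routine and the present lemma is immediate.
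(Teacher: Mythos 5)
Your proposal is correct. The paper itself gives no proof of this lemma (it is imported verbatim from the cited reference on capable Lie superalgebras), and your argument is exactly the standard one: with $Z^{*}(L)$ defined as the smallest graded ideal whose quotient is capable, both implications are formal one-liners, and you rightly locate the only genuine content in the well-definedness of $Z^{*}(L)$ --- i.e.\ that the family of such ideals has a least element --- which is precisely what the cited source establishes via the exterior-square construction and the criterion that $N\subseteq Z^{*}(L)$ iff $\mathcal{M}(L)\to\mathcal{M}(L/N)$ is injective (Theorem \ref{tt} here).
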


\begin{theorem} [15, Theorem 4.9]\label{tt}
Let $N$ be a central ideal in a Lie superalgebra $L$. Then the following conditions are equivalent
\begin{enumerate}[(i)]
\item{$\frac{\mathcal{M}(L/N)}{\mathcal{M}(L)}\cong N\cap L^2$}
\item{$N\subseteq Z^{*}(L)$}
\item{$\mathcal{M}(L)\to \mathcal{M}(L/N)$ is monomorphism.}
\end{enumerate}

\end{theorem}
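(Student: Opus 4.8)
The plan is to route the whole statement through the nonabelian exterior square $L\wedge L$ of a Lie superalgebra together with its fundamental short exact sequence, which in the $\mathbb{Z}_{2}$-graded setting takes the form
$$0\longrightarrow\mathcal{M}(L)\longrightarrow L\wedge L\ \xrightarrow{\ \kappa\ }\ L^{2}\longrightarrow 0,\qquad \kappa(x\wedge y)=[x,y],$$
and to use the description of $Z^{*}(L)$ as the exterior center, $Z^{*}(L)=Z^{\wedge}(L):=\{x\in L\colon x\wedge y=0 \text{ in } L\wedge L \text{ for all } y\in L\}$, which is the super-analogue of the classical fact and is consistent with the cited Lemma that $L$ is capable iff $Z^{*}(L)=\{0\}$.

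Fix a central graded ideal $N$ of $L$. First I would record the graded analogue of the standard fact that the canonical epimorphism $L\wedge L\twoheadrightarrow (L/N)\wedge(L/N)$ has kernel $T$ equal to the graded span of the elements $n\wedge l$ with $n\in N$ and $l\in L$; since $N$ is central we have $\kappa(n\wedge l)=[n,l]=0$, so $T\subseteq\ker\kappa=\mathcal{M}(L)$. Because $(L/N)^{2}=L^{2}/(N\cap L^{2})$ and the bracket map of $L/N$ is induced by $\kappa$ through the quotients, one obtains $\mathcal{M}(L/N)\cong\kappa^{-1}(N\cap L^{2})/T$, and under these identifications the natural map $\phi\colon\mathcal{M}(L)\to\mathcal{M}(L/N)$ is simply $x\mapsto x+T$ on $\ker\kappa$. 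Two consequences fall out at once: $\ker\phi=\ker\kappa\cap T=T$, and, by the isomorphism theorems, $\mathcal{M}(L/N)/\phi(\mathcal{M}(L))\cong\kappa^{-1}(N\cap L^{2})/\ker\kappa\cong N\cap L^{2}$ (using that $\kappa$ maps onto $L^{2}\supseteq N\cap L^{2}$).

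Granting this, the three equivalences are formal. Since $T$ is spanned by the $n\wedge l$, the map $\phi$ is injective iff $T=\{0\}$ iff $n\wedge l=0$ for all $n\in N$ and $l\in L$ iff $N\subseteq Z^{\wedge}(L)=Z^{*}(L)$; this is (ii)$\Leftrightarrow$(iii). For (i)$\Leftrightarrow$(iii): if $\phi$ is injective then $\phi(\mathcal{M}(L))\cong\mathcal{M}(L)$ and the cokernel computation above gives (i); conversely, (i) forces $\dim\phi(\mathcal{M}(L))=\dim\mathcal{M}(L)$ (comparing dimensions with the always-valid isomorphism $\mathcal{M}(L/N)/\phi(\mathcal{M}(L))\cong N\cap L^{2}$ in the finite-dimensional case, and matching the embedding built into the statement of (i) in general), hence $\phi$ is injective. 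If one prefers to avoid the exterior square for this last point, (i)$\Leftrightarrow$(iii) also drops out of the Lie-superalgebra five-term exact sequence $\mathcal{M}(L)\xrightarrow{\ \phi\ }\mathcal{M}(L/N)\to N\cap L^{2}\to 0$ attached to the central extension $0\to N\to L\to L/N\to 0$.

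The main obstacle is not this bookkeeping but the inputs it rests on: constructing the nonabelian exterior square $L\wedge L$ for $\mathbb{Z}_{2}$-graded Lie algebras with all the signs correctly in place, proving exactness of the sequence involving $\kappa$ together with the identification of the kernel $T$ of $L\wedge L\to(L/N)\wedge(L/N)$ when $N$ is central, and establishing the identity $Z^{*}(L)=Z^{\wedge}(L)$ — equivalently, that the exterior center coincides with the smallest graded ideal whose quotient is capable — in the super setting. These are the places where the $\mathbb{Z}_{2}$-grading and its sign conventions genuinely intervene; once their verification is complete, the proof concludes exactly as sketched.
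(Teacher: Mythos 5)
The paper does not prove this statement: it is imported verbatim as Theorem 4.9 of the cited reference (the result on detecting capable Lie superalgebras), so there is no in-house argument to compare yours against. On its own terms, your sketch is the standard and correct route, and it is the natural one here because it runs on exactly the machinery this paper already uses elsewhere, namely the realization $\mathcal{M}(L)=\ker\bigl(\tilde\kappa\colon L\wedge L\to L^{2}\bigr)$ from the exterior square of a Lie superalgebra. Your bookkeeping checks out: with $T$ the image of $N\wedge L$ in $L\wedge L$ one has $T\subseteq\ker\tilde\kappa$ by centrality, $\mathcal{M}(L/N)\cong\tilde\kappa^{-1}(N\cap L^{2})/T$, $\ker\phi=T$, and $\mathcal{M}(L/N)/\phi(\mathcal{M}(L))\cong N\cap L^{2}$ always, from which (ii)$\Leftrightarrow$(iii) and (i)$\Leftrightarrow$(iii) follow as you say. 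Two caveats are worth making explicit. First, your argument for (i)$\Rightarrow$(iii) is a dimension count and therefore genuinely uses finite-dimensionality (harmless here, since the paper only applies the theorem to finite-dimensional nilpotent superalgebras, but it is not the ``formal'' consequence your phrasing suggests). Second, the identification $Z^{*}(L)=Z^{\wedge}(L)$ is not a convention but the real content of the equivalence with (ii): the paper defines $Z^{*}(L)$ as the smallest graded ideal with capable quotient, so equating it with the exterior center is itself a theorem in the super setting (it is proved in the capability reference the paper cites). You flag this honestly as an unproved input, which is the right thing to do; with that input and the graded exterior-square exact sequences in hand, your proof is complete.
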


\section{Main Results}
In this section, we undertake a comprehensive categorization of all non-abelian nilpotent Lie superalgebras of dimension $(m|n)$, constrained by the condition $1 \leq s(L) \leq 10$, where $s(L)$ is a non-negative integer as defined by Nayak. Our classification also encompasses all Lie superalgebras of dimension at most five for which $\dim L^2 = \dim \mathcal{M}(L)$, offering a structural perspective that refines previous frameworks and advances our understanding of these algebraic systems.
\begin{theorem}\label{t1.15}
Let $L$ be a nilpotent Lie superalgebra with dimensional greater than $2$ and $n\geq 1$ such that $\dim{L^2}=m+n-2$, then
$$(n+m)(n+m-3)+4\leq 2t(L) <(n+m)^2+n-m.$$
\end{theorem}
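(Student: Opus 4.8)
The plan is to bound $t(L)$ from both sides using the two relations that connect $t(L)$, $s(L)$, and $\dim\mathcal{M}(L)$, namely
\[
\dim\mathcal{M}(L)=\tfrac12\bigl[(n+m)^2+(n-m)\bigr]-t(L)
\qquad\text{and}\qquad
t(L)=m+n-2+s(L),
\]
together with $s(L)\ge 0$ and the fact (Theorem~\ref{t1.14}) that $\mathcal{M}(L)\ne 0$. First I would establish the upper bound. Since $L$ is nilpotent of dimension $(m|n)$ with $n\ge 1$ and $\dim L^2=m+n-2\ge 1$ (as $\dim L>2$), $L$ is non-abelian, so by Theorem~\ref{t1.9} we have $t(L)\ge 1$; in fact more is needed, but for the strict inequality $2t(L)<(n+m)^2+n-m$ it suffices that $t(L)\ge 1$, equivalently $\dim\mathcal{M}(L)<\tfrac12[(n+m)^2+(n-m)]$, which is exactly the non-abelian case of Theorem~\ref{t1.9}. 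Thus the right-hand inequality is immediate.

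For the lower bound I would use Theorem~\ref{t1.12} with $r+s=\dim L^2=m+n-2$. Substituting $r+s=m+n-2$ into the bound there gives
\[
\dim\mathcal{M}(L)\le \tfrac12\bigl[(m+n+(m+n-2)-2)\,(m+n-(m+n-2)-1)\bigr]+n+1
=\tfrac12(2(m+n)-4)\cdot 1+n+1 = m+2n,
\]
so $\dim\mathcal{M}(L)\le m+2n$. Combining this with $\dim\mathcal{M}(L)=\tfrac12[(n+m)^2+(n-m)]-t(L)$ yields
\[
t(L)\ \ge\ \tfrac12\bigl[(n+m)^2+(n-m)\bigr]-(m+2n)
=\tfrac12\bigl[(n+m)^2+(n-m)-2m-4n\bigr]
=\tfrac12\bigl[(n+m)^2-(3m+3n)\bigr]
=\tfrac12(n+m)(n+m-3).
\]
Hence $2t(L)\ge (n+m)(n+m-3)$, and it remains to improve this by the additive constant $4$.

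The main obstacle — and the step I would spend the most care on — is sharpening the lower bound from $(n+m)(n+m-3)$ to $(n+m)(n+m-3)+4$, i.e. showing $\dim\mathcal{M}(L)\le m+2n-2$ strictly rather than $\le m+2n$. The equality case of Theorem~\ref{t1.12} for $r+s=1$ is not available here since $r+s=m+n-2\ge 2$ unless $m+n=3$; so I would argue that when $\dim L^2=m+n-2$ is as large as possible relative to $\dim L=m+n$, the superalgebra $L$ is forced to have nilpotency class exactly $2$ with a one-dimensional... more precisely, $\dim L/L^2 = 2$, which is a very rigid situation (it forces $L^2=Z(L)$ and the commutator to be induced by a single alternating/bilinear form on the $2$-dimensional quotient, whose image has dimension at most... ). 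In that regime one can compute $\dim\mathcal{M}(L)$ directly, or invoke the Hopf-type formula / the exact sequence behind Theorem~\ref{tt} to peel off a central generator: writing $L/Z(L)$ and using that $\dim\mathcal{M}(L)\le \dim\mathcal{M}(L/N)+\dim(N\cap L^2)$ for a suitable $1$-dimensional central $N\not\subseteq Z^*(L)$, one reduces $(m|n)$ and induces on $m+n$, the base cases $m+n=3,4,5$ being checked against the known low-dimensional classification. Carrying out this reduction so that it loses exactly the constant $2$ in the multiplier bound (equivalently, gains the $+4$ in $2t(L)$) is the delicate part; everything else is the two substitutions above.
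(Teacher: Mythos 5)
Your overall strategy (bound $\dim\mathcal{M}(L)$ above to get the lower bound on $t(L)$, and below to get the strict upper bound) is the same as the paper's, but both halves of your write-up have problems, and the crucial ingredient is missing. For the strict inequality $2t(L)<(n+m)^2+n-m$ you argue from ``$t(L)\ge 1$, equivalently $\dim\mathcal{M}(L)<\tfrac12[(n+m)^2+(n-m)]$'' via Theorem~\ref{t1.9}. This is the wrong direction: $t(L)\ge 1$ is a \emph{lower} bound on $t(L)$ and cannot produce an upper bound on $2t(L)$. What is needed is a lower bound on the multiplier, namely $\dim\mathcal{M}(L)>0$, which is exactly Theorem~\ref{t1.14} ($\mathcal{M}(L)\neq 0$ for nilpotent $L$ of dimension $>1$); you name this fact in your opening paragraph but then do not use it where it is required. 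The paper's proof uses precisely Theorem~\ref{t1.14} here.

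The more serious gap is in the lower bound. Substituting $r+s=m+n-2$ into Theorem~\ref{t1.12} gives $\dim\mathcal{M}(L)\le \tfrac12(2(m+n)-4)\cdot 1+n+1=m+2n-1$ (you wrote $m+2n$; either way it does not suffice), whereas the inequality $(n+m)(n+m-3)+4\le 2t(L)$ is \emph{equivalent} to $\dim\mathcal{M}(L)\le m+2n-2$. So Theorem~\ref{t1.12} leaves you short by $1$ in the multiplier bound, i.e.\ by $2$ in $2t(L)$. The paper closes this gap by citing Theorem~5.4 of Nayak \cite{9}, which asserts $\dim\mathcal{M}(L)\le m+2n-2$ for nilpotent Lie superalgebras of maximal class; that external sharpening is the whole content of the lower bound and is not something you recover. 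Your proposed repair is only a sketch and, as stated, is not sound: a Lie superalgebra of maximal class with $\dim L>3$ has nilpotency class $m+n-1>2$, so it is not of class $2$ and $L^2\neq Z(L)$ in general, and the inductive peeling-off of a central one-dimensional ideal via Theorem~\ref{tt} is never actually carried out or shown to lose exactly the needed constant. As it stands the proposal proves only $2t(L)\ge (n+m)(n+m-3)+2$, not the stated bound.
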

\begin{proof}
Using Theorem $5.4$ in \cite{9}, we have $\dim{\mathcal{M}(L)}\leq m+2n-2$. Since $\dim{\mathcal{M}(L)}=\frac{1}{2}[(n+m)^2+n-m]-t(L)$, we can see that
$$\frac{1}{2}[(n+m)^2+n-m]-t(L)\leq m+2n-2.$$
Thus, $(n+m)(n+m-3)+4\leq 2t(L)$. Now, Theorem \ref{t1.14} implies that $\dim{\mathcal{M}(L)}>0$, so $\frac{1}{2}[(n+m)^2+n-m]-t(L)>0$. Hence $2t(L)<(n+m)^2+n-m$ and the result follows.
\end{proof}

\begin{theorem}\label{t1.16}
Let $L$ be a nilpotent Lie superalgebra with dimensional greater than $2$ with $n\geq 1$ such that $\dim{L^2}=m+n-2$, then
$$(n+m)(n+m-5)+8\leq 2s(L) <(n+m)(n+m-1)-2m+4.$$
\end{theorem}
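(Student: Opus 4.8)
The plan is to derive Theorem \ref{t1.16} directly from Theorem \ref{t1.15} by translating the bounds on $t(L)$ into bounds on $s(L)$ using the stated relation between the two invariants. Recall from the discussion following Theorem \ref{t1.12} that $t(L)=m+n-2+s(L)$, so $s(L)=t(L)-(m+n-2)$, i.e. $2s(L)=2t(L)-2(m+n)+4$. Thus it suffices to subtract $2(m+n)-4$ from each side of the double inequality in Theorem \ref{t1.15}.

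For the lower bound, Theorem \ref{t1.15} gives $(n+m)(n+m-3)+4\leq 2t(L)$, hence
$$2s(L)\geq (n+m)(n+m-3)+4-2(n+m)+4=(n+m)(n+m-3)-2(n+m)+8=(n+m)(n+m-5)+8,$$
which is exactly the claimed left-hand inequality. For the upper bound, Theorem \ref{t1.15} gives $2t(L)<(n+m)^2+n-m$, so
$$2s(L)<(n+m)^2+n-m-2(n+m)+4=(n+m)(n+m-2)+n-m+4-2\cdot 0,$$
and regrouping $(n+m)^2-2(n+m)+(n-m)+4=(n+m)(n+m-1)-(n+m)+(n-m)+4=(n+m)(n+m-1)-2m+4$ yields the claimed right-hand inequality $2s(L)<(n+m)(n+m-1)-2m+4$.

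Since no step beyond Theorem \ref{t1.15} is needed other than the substitution $t(L)=m+n-2+s(L)$ and elementary algebraic simplification, there is essentially no obstacle here; the only point requiring care is the bookkeeping in the second computation, namely verifying that $(n+m)^2-2(n+m)+(n-m)+4$ collapses to $(n+m)(n+m-1)-2m+4$. I would double-check this by expanding both expressions: $(n+m)^2-2(n+m)+(n-m)+4=(n+m)^2-2n-2m+n-m+4=(n+m)^2-n-3m+4$, while $(n+m)(n+m-1)-2m+4=(n+m)^2-(n+m)-2m+4=(n+m)^2-n-3m+4$, so the two agree. This completes the proof.
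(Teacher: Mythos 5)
Your proposal is correct and follows exactly the paper's argument: substitute $t(L)=m+n-2+s(L)$ into the double inequality of Theorem \ref{t1.15} and simplify, and your algebraic check that $(n+m)^2-2(n+m)+(n-m)+4=(n+m)(n+m-1)-2m+4$ is accurate. No differences from the paper's proof worth noting.
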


\begin{proof}
Since $t(L)=(n+m-2)+s(L)$, by using Theorem \ref{t1.15}, we have
$$(n+m)(n+m-3)+4\leq 2(n+m-2)+2s(L) <(n+m)^2+(n-m).$$
Thus,
\begin{align*}
(n+m)(n+m-3)+4-2(n+m-2) &\leq 2(n+m-2)+2s(L)\\
&<(n+m)^2+(n-m)-2(n+m-2)
\end{align*}
So we have $(n+m)(n+m-5)+8\leq 2s(L)$ and $2s(L) <(n+m)(n+m-1)-2m+4$ and the result is obtained.
\end{proof}

At present, we must classify nilpotent Lie superalgebras of dimension at most 5 that are not Lie algebras. N. Backhouse and N. L. Matiadou classified these Lie superalgebras into two categories: trivial and non-trivial. It is noteworthy that the Lie superalgebra $L$ is considered trivial if $[L_{\bar{1}},L_{\bar{1}}]=0$, and non-trivial otherwise. 
Following the notations in \cite{1}, we denote the elements of $L_{\bar{0}}$ $($resp $L_{\bar{1}})$ by Latin letters $($resp Greek letters$)$ taken from the beginning of the alphabet. Using this classification, we have the following Lie superalgebras with maximal class $(\dim L^2=m+n-2)$. In the following section, we will compute the Schur multiplier of these in Theorem \ref{t}.

\begin{tabular}{llc}
\\
{Table $1$, $(1,2)$-Lie superalgebra}
\\
\hline
Name&Relations&$\dim{\mathcal{M}(L)}$\\
\hline
Trivial LS\\
$L_{1,2}^{(3)}$ & $[\alpha,\beta]=\alpha$&$2$\\
Non-Trivial LS\\
$L_{1,2}^{(1)}$ & $[\alpha,\alpha]=a , [\beta,\beta]=a$&$2$\\
$L_{1,2}^{(2)}$ & $[\alpha,\alpha]=a , [\beta,\beta]=-a$&$2$\\
\hline
\end{tabular}

\begin{tabular}{llc}
\\
{Table $2$, $(1,3)$-Lie superalgebra}
\\
\hline
Name&Relations&$\dim{\mathcal{M}(L)}$\\
\hline
Trivial LS\\

$L_{1,3}^{(5)}$ & $[a,\beta]=\alpha , [a,\gamma]=\beta$&$3$\\
\hline
\end{tabular}

\begin{tabular}{llc}
\\
{Table $3$, $(2,2)$-Lie superalgebra}
\\
\hline
Name&Relations&$\dim{\mathcal{M}(L)}$\\
\hline
Non-Trivial LS\\
$L_{2,2}^{(9)}$ & $[\alpha,\alpha]=a , [\beta,\beta]=b$&$2$\\
$L_{2,2}^{(10)}$ & $[\alpha,\alpha]=a , [\beta,\beta]=b, [\alpha,\beta]=a$&$1$\\
$L_{2,2}^{(11)}$ & $[\alpha,\alpha]=a, [\beta,\beta]=b,$&$1$\\
$\ $&$[\alpha,\beta]=p(a+b) \ ; \  p>0$&$\ $\\
$L_{2,2}^{(12)}$ & $[\alpha,\alpha]=a, [\beta,\beta]=b,$&$1$\\
$\ $&$[\alpha,\beta]=p(a-b) \ ; \  p>0$&$\ $\\

\hline
\end{tabular}

\begin{tabular}{llc}
\\
{Table $4$, $(1,4)$-Lie superalgebra}
\\
\hline
Name&Relations&$\dim{\mathcal{M}(L)}$\\
\hline
Trivial LS\\
$E^{22}$ & $[a,\alpha]=\beta, [a,\beta]=\gamma, [a,\gamma]=\delta,$&$6$\\

\hline
\end{tabular}

\begin{tabular}{llc}
\\
{Table $5$, $(3,2)$-Lie superalgebra}
\\
\hline
Name&Relations&$\dim{\mathcal{M}(L)}$\\
\hline
Non-Trivial LS\\

$3A_{1,1}+2A$ & $[\alpha,\alpha]=a, [\beta,\beta]=b, [\alpha,\beta]=c$&$3$\\
\hline
\end{tabular}

\begin{tabular}{llc}
\\
{Table $6$, $(2,3)$-Lie superalgebras}
\\
\hline
Name&Relations&$\dim{\mathcal{M}(L)}$\\
\hline
Non-Trivial LS\\

$(D^{15}+A_{1,1})^1$ & $[a,\beta]=\alpha, [a,\gamma]=\beta, [\gamma,\gamma]=b$&$1$\\

$(D^{15}+A_{1,1})^2$ & $[a,\beta]=\alpha, [a,\gamma]=\beta, [\beta,\beta]=b$&$2$\\
$\ $&$[a,\gamma]=-b$&$\ $\\

$(D^{15}+A_{1,1})^3$ & $[a,\beta]=\alpha, [a,\gamma]=\beta, [\beta,\beta]=b$&$3$\\
$\ $&$[\gamma,\gamma]=b, [\alpha,\gamma]=-b$&$\ $\\

$(D^{15}+A_{1,1})^4$ & $[a,\beta]=\alpha, [a,\gamma]=\beta, [\beta,\beta]=b$&$2$\\
$\ $&$[\gamma,\gamma]=-b, [\alpha,\gamma]=-b$&$\ $\\
\hline
\\
\end{tabular}

According to the free presentations in the previous tables, we want to compute the Schur multiplier of them.
\begin{theorem}\label{t}
The Schur multiplier of Lie superalgebras is given in the Table $7$.
\begin{tabular}{lr|lr}
\\
{Table $7$, Schur Multiplier}
\\
\hline
LS&$\mathcal{M}(L)$&LS&$\mathcal{M}(L)$\\
\hline
$L_{1,2}^{(1)}$ &$A(2|0)$&$(D^{15}+A_{1,1}))^3$&$A(2|1)$\\
\hline
$L_{1,2}^{(2)}$&$A(2|0)$&$L_{2,2}^{(10)}$&$A(1|0)$\\
\hline
$L_{1,2}^{(3)}$&$A(1|1)$&$L_{2,2}^{(11)}$&$A(1|0)$\\
\hline
$L_{2,2}^{(9)}$&$A(1|1)$&$L_{2,2}^{(12)}$&$A(1|0)$\\
\hline
$(D^{15}+A_{1,1})^2$&$A(1|1)$&$E^{22}$&$A(5|1)$\\
\hline
$(D^{15}+A_{1,1})^4$&$A(1|1)$&$3A_{1,1}+2A$&$A(1|2)$\\
\hline
$L_{1,3}^{(5)}$&$A(2|1)$&$(D^{15}+A_{1,1})^1$&$A(0|1)$\\

\end{tabular}
\\
\end{theorem}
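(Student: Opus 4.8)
The plan is to compute each of the fourteen entries directly from a free presentation, using the Hopf-type formula for the Schur multiplier of a Lie superalgebra: if $0\to R\to F\to L\to 0$ is exact with $F$ a free Lie superalgebra, then $\mathcal{M}(L)\cong (R\cap F^{2})/[F,R]$, and this isomorphism is one of $\mathbb{Z}_{2}$-graded vector spaces, so it suffices to carry the parities along. Since every $L$ in Tables $1$--$6$ satisfies $\dim L/L^{2}=2$, for each one I would fix the minimal presentation in which $F$ is the free Lie superalgebra on two homogeneous generators whose parities are those of a homogeneous basis of $L/L^{2}$, and $R$ is the ideal generated by $F^{\,c+1}$ together with the finitely many relations of degree at most $c$ that are read off from the displayed multiplication table, $c$ being the nilpotency class of $L$. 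The proof is then a finite list of such computations.

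For the $2$-step nilpotent examples ($L_{1,2}^{(1)}$, $L_{1,2}^{(2)}$, $L_{1,2}^{(3)}$, $L_{2,2}^{(9)}$, $L_{2,2}^{(10)}$, $L_{2,2}^{(11)}$, $L_{2,2}^{(12)}$, $3A_{1,1}+2A$) one has $R=F^{3}+\langle\,\text{the degree-}2\text{ relations}\,\rangle$ and $[F,R]=F^{4}+[\langle\,\text{those relations}\,\rangle,F]$, so $\mathcal{M}(L)$ is the span of $F^{3}/F^{4}$ together with the chosen degree-$2$ relations, modulo the images of the latter under bracketing with the two generators. Everything thus reduces to writing down a basis of the degree-$3$ component of the free Lie superalgebra on two generators of the given parities --- a short computation with the graded Jacobi identity --- and to a small linear-algebra step identifying which classes survive. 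The parities come for free (a triple bracket of odd elements is odd, an even--odd--odd bracket is even, and so on), and reading off the even and odd dimensions yields the $A(p|q)$ recorded in Table $7$.

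For the remaining, genuinely longer examples ($L_{1,3}^{(5)}$, $E^{22}$ and the four $(D^{15}+A_{1,1})^{j}$), each is built from a single even element acting as a shift operator along a short chain of odd elements, together with at most one extra odd--odd relation landing in a second even element. Here I would list a basis of $F$ up through degree $c+1$, pin down $R$ exactly (it is spanned by the iterated brackets that already vanish in $L$ together with $F^{\,c+1}$), compute $[F,R]$ degree by degree, and read off the graded quotient; the case $E^{22}$ runs out to degree $4$ and is the most laborious. At each stage I would cross-check the answer against the value of $\dim\mathcal{M}(L)$ already entered in Tables $1$--$6$, against the bound of Theorem \ref{t1.12} (which for $\dim L^{2}=m+n-2$ specialises to $\dim\mathcal{M}(L)\le m+2n-2$), and against Theorem \ref{t1.14} (nonvanishing). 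Several entries need not be recomputed from scratch: if $N\subseteq Z^{*}(L)$ is a suitable central ideal, Theorem \ref{tt} gives $\mathcal{M}(L)\hookrightarrow\mathcal{M}(L/N)$ with cokernel $N\cap L^{2}$, which, together with the known multipliers of abelian superalgebras, lets one deduce one case from another.

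The step I expect to be the main obstacle is the sign bookkeeping in the graded Jacobi identity when passing from $F^{2}$ to $F^{3}$ and $F^{4}$ and when forming $[F,R]$: for an odd generator $x$ the element $[x,x]$ is nonzero, and $[[x,x],x]$ is forced to vanish only when suitable small primes are invertible in $\mathbb{K}$, so one must identify precisely which linear combinations of iterated brackets are annihilated, check that the surviving ones are linearly independent, and make sure no degree-$2$ relation is accidentally absorbed into $[F,R]$. Once a correct basis of each $(R\cap F^{2})/[F,R]$ is in hand, splitting it into its even and odd parts to produce the $A(p|q)$'s of Table $7$ is routine.
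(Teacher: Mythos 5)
Your route is sound but genuinely different from the one the paper takes. The paper never writes down a free presentation: it works inside the graded non-abelian exterior square, first listing which generators $u\wedge v$ of $L\wedge L$ vanish by the defining relations of $\wedge$ quoted from the authors' earlier work, and then identifying $\mathcal{M}(L)$ with $\ker\bigl(\tilde{\kappa}\colon L\wedge L\to L^2,\ u\wedge v\mapsto [u,v]\bigr)$, reading off the parity of each surviving generator as $|u|+|v|$. Your Hopf-formula computation is equivalent in principle --- $L\wedge L\cong F^2/[F,R]$ and $(R\cap F^2)/[F,R]$ is exactly the kernel of the induced map onto $F^2/R\cong L^2$ --- so the two proofs are the same linear algebra packaged differently. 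What the paper's packaging buys is that one never has to describe the free Lie superalgebra beyond low-degree wedges; what yours buys is self-containedness and automatic parity bookkeeping, at the price of needing the graded components of the free Lie superalgebra on two homogeneous generators up to degree $c+1$ (degree $5$ for $E^{22}$, as you note).

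Two points to watch before declaring the entries of Table $7$ proved. First, the decisive step in almost every case is exactly the one you flag: for odd generators $x,y$ the graded Jacobi identity gives $2[x,[x,y]]=[[x,x],y]$ and $3[x,[x,x]]=0$, hence over $\mathbb{R}$ one has $[[x,y],x]=-\tfrac{1}{2}[[x,x],y]$ and $[[x,x],x]=0$. This means $[F,R]$ absorbs degree-three classes that an ungraded count would leave alive, and it is precisely what decides whether a given $A(p|q)$ has the stated even and odd dimensions. You must carry these identities through every case and then reconcile the output with the table entry by entry; this reconciliation is not a formality, and for at least one two-step case (e.g.\ $L_{2,2}^{(9)}$, which splits as a direct sum of two $(1|1)$-dimensional pieces) a careful execution of your method should be double-checked against the tabulated value before you accept both. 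Second, a small slip in your cross-checks: Theorem \ref{t1.12} with $r+s=m+n-2$ gives $\dim\mathcal{M}(L)\le m+2n-1$, not $m+2n-2$; the sharper bound $m+2n-2$ invoked in Theorem \ref{t1.15} comes from Nayak's Theorem 5.4, not from Theorem \ref{t1.12}.
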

\begin{proof}
We provide a comprehensive proof for $L_{1,2}^{(3)}$ and $(D^{15}+A_{1,1})^4$. The remaining cases will be proven in a manner analogous to this. 
Let $L \cong L_{1,2}^{(3)}$, as defined in  \cite[Definition 1.3]{1'}. According to this definition, $\alpha \wedge \alpha=\alpha \wedge \beta=0$. Therefore, $L\wedge L=<a\wedge \alpha , a\wedge \beta , \beta \wedge \beta >$.
Consequently, for all $w \in \mathcal{M}(L)$, there exists $\alpha_1 , \alpha_2 , \alpha_3\in \Bbb{R}$, such that
$w={\alpha_1}(a\wedge \alpha)+{\alpha_2}(a\wedge \beta)+{\alpha_3}(\beta \wedge \beta)$.

Now let $\tilde{\kappa}: L\wedge L\to [L,L]$ be given by $(x\wedge y \to [x,y])$. Since $\tilde{\kappa}(w)=0$, we have ${\alpha_1}[a,\alpha]+{\alpha_2}[a, \beta]+{\alpha_3}[\beta , \beta]$, so ${\alpha_2}{\alpha}=0$ and $\alpha_2=0$. Thus $w={\alpha_1}(a\wedge \alpha)+{\alpha_3}(\beta \wedge \beta)$. Therefore $\mathcal{M}(L)=<a\wedge \alpha , \alpha \wedge \alpha>$ and $\dim \mathcal{M}(L)=2$. On the other hand, by using \cite[Definition 1.3]{1'}, $\tilde{\kappa}$ is a homogeneous linear map of even degree. Also, we know
$|[a,\alpha]|=|a|+|\alpha |=0+1=1$ and $|[\beta,\beta]|=|\beta |+|\beta |=1+1=0$. Hence $a\wedge \alpha$ and $\beta \wedge \beta$ are odd and even elements of $L\wedge L$, respectively. Finally, since the Schur multiplier of a Lie superalgebra is abelian, we have $\mathcal{M}(L)\cong A(1|1)$.
\\
Let $L\cong (D^{15}+A_{1,1})^4$, by using the \cite[Definition 1.3]{1'}, we have $a\wedge b=b\wedge \alpha =b\wedge \beta =b\wedge \gamma = \alpha\wedge \alpha =\alpha \wedge \beta =\beta \wedge \gamma =0,$ and $\beta\wedge \beta =-\gamma \wedge \alpha$. Thus $L\wedge L=<a\wedge \alpha , a\wedge \beta , a\wedge \gamma , \alpha \wedge \alpha , \gamma \wedge \gamma >$.
Hence for all $w\in \mathcal{M}(L)$, there exist $\alpha_1 , \alpha_2 , \alpha_3 , \alpha_4 , \alpha_5 \in \Bbb{R}$, such that
$w={\alpha_1}(a\wedge \alpha)+{\alpha_2}(a\wedge \beta)+{\alpha_3}(a\wedge \gamma)+{\alpha_4}(\alpha \wedge \alpha)+{\alpha_5}(\gamma \wedge \gamma)$.
Now let $\tilde{\kappa}: L\wedge L\to [L,L]$ be given by $(x\wedge y \to [x,y])$. Since $\tilde{\kappa}(w)=0$, we have ${\alpha_1}[a,\alpha]+{\alpha_2}[a, \beta]+{\alpha_3}[a, \gamma]+{\alpha_4}[\alpha, \alpha]+{\alpha_5}[\gamma, \gamma]=0$, so, ${-\alpha_1}{\beta}+{\alpha_2}{\alpha}+{(\alpha_4+\alpha_5)}{b}=0$ and $\alpha_1, \alpha_2=0, \alpha_5=-\alpha_4$. Thus $w={\alpha_3}(a\wedge \gamma)+{\alpha_4}(\alpha \wedge \alpha - \gamma \wedge \gamma)$. Therefore $\mathcal{M}(L)=<a\wedge \gamma , \alpha \wedge \alpha - \gamma \wedge \gamma>$ and $\dim \mathcal{M}(L)=2$. On the other hand, by using Definition $1.3$ in \cite{1'}, $\tilde{\kappa}$ is a homogeneous linear map of even degree. Also, we know
$|[a,\gamma]|=1, |[\alpha,\alpha]|=0$ and $|[\gamma,\gamma]|=0$. Hence $a\wedge \gamma$, $\alpha \wedge \alpha - \gamma \wedge \gamma$ are odd and even elements of $L\wedge L$, respectively. Finally, since the Schur multiplier of a Lie superalgebra is abelian, we have $\mathcal{M}(L)\cong A(1|1)$.
\end{proof}
At present, our focus is on classifying non-abelian nilpotent Lie superalgebras of maximal class for $1\leq s(L)\leq 10$.
\begin{theorem}\label{t3}
Let $L$ be a non-abelian $(m|n)$-dimensional nilpotent Lie superalgebra of maximal class and $m+n\geq 3$. Then $1\leq s(L)\leq 10$ if and only if $L$ is isomorphic to the one of the Lie superalgebras listed in the table $8$.
\end{theorem}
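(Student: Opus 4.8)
The plan is to reduce the classification to a finite combinatorial search governed by the inequalities of Theorem~\ref{t1.16}, and then to match the surviving candidates against the explicit list of low-dimensional nilpotent Lie superalgebras of maximal class recorded in Tables~1--6 together with the Schur-multiplier computations of Theorem~\ref{t}. First I would observe that, for a Lie superalgebra of maximal class, $\dim L^2 = m+n-2$, so Theorem~\ref{t1.16} applies and gives
\[
(n+m)(n+m-5)+8 \;\leq\; 2s(L) \;<\; (n+m)(n+m-1)-2m+4.
\]
Setting $d = m+n$ and imposing $s(L)\leq 10$, the left-hand inequality forces $(d)(d-5)+8 \leq 20$, i.e.\ $d^2-5d-12\leq 0$, which bounds $d\leq 6$ (and the hypothesis $m+n\geq 3$ gives $d\geq 3$). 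So only dimensions $d\in\{3,4,5,6\}$ can occur, and for each such $d$ one has a short list of admissible pairs $(m|n)$ with $n\geq 1$ (the case $n=0$ being the already-classified Lie-algebra situation, which I would cite from \cite{15}).

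Next, for each admissible $(m|n)$ I would invoke the classification of N.~Backhouse and Matiadou (\cite{1,7'}) of nilpotent Lie superalgebras of dimension at most $5$, restrict to those with $\dim L^2 = m+n-2$ — these are precisely the ones tabulated in Tables~1--6 — and read off $\dim\mathcal{M}(L)$ from Table~7. For each one I would then compute
\[
s(L) \;=\; \tfrac12(m+n-2)(m+n-1)+n+1-\dim\mathcal{M}(L),
\]
check that the value lies in $\{1,\dots,10\}$, and thereby populate Table~8. For $d=6$, since the Backhouse/Matiadou tables stop at dimension $5$, I would argue separately: a $6$-dimensional maximal-class Lie superalgebra has $\dim L^2 = 4$, and using Theorem~\ref{t1.12} together with the structure of $L/Z(L)$ or an induction on dimension via Theorem~\ref{tt} (the monomorphism $\mathcal{M}(L)\to\mathcal{M}(L/N)$ for a one-dimensional central ideal $N\subseteq Z^*(L)\cap L^2$) one pins down the multiplier and hence $s(L)$; only finitely many graded shapes $(m|n)$ with $m+n=6$ survive the numerical constraint, and each can be handled by hand.

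The converse direction is immediate: for every algebra listed in Table~8 one simply evaluates $s(L)$ from its multiplier (Theorem~\ref{t}) and checks $1\leq s(L)\leq 10$, so the content is entirely in the forward direction.

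I expect the main obstacle to be the dimension-$6$ case (and, more generally, ensuring the list from \cite{1,7'} is complete and correctly filtered by the condition $\dim L^2 = m+n-2$): there is no ready-made table to quote, so one must either extend the computation of $\mathcal{M}(L)$ directly via the $\wedge$-presentation as in the proof of Theorem~\ref{t}, or set up a clean inductive step that peels off a central element and controls how $\dim\mathcal{M}$ changes. A secondary subtlety is bookkeeping the $\mathbb{Z}_2$-grading of $\mathcal{M}(L)$ correctly (the even/odd split of each $x\wedge y$), since $s(L)$ depends on $n$ and on $\dim\mathcal{M}(L)$ but not on its grading, whereas the isomorphism type of $\mathcal{M}(L)$ in Table~7 does; getting $n$ right for each candidate is where an error would most easily creep in.
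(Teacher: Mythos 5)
Your strategy coincides with the paper's: apply Theorem \ref{t1.16} (valid here since maximal class means $\dim L^2=m+n-2$) to bound $m+n$ for each admissible value of $s(L)$, then run through the Backhouse--Matiadou classification as filtered into Tables 1--6, read off $\dim\mathcal{M}(L)$ from Theorem \ref{t}, and evaluate $s(L)=\frac{1}{2}(m+n-2)(m+n-1)+n+1-\dim\mathcal{M}(L)$; the converse direction is, as you say, just this evaluation. The one place where you genuinely go beyond the paper is the case $m+n=6$: you are right that the inequality $(m+n)(m+n-5)+8\le 2s(L)\le 20$ only forces $m+n\le 6$, with $7\le s(L)\le 10$ still numerically admissible in dimension $6$, whereas the paper's proof simply confines itself to ``dimension at most $5$'' and never rules that case out, so your observation exposes a gap in the published argument rather than in your plan. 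Note, however, that your own treatment of dimension $6$ is still only a sketch (an appeal to Theorem \ref{t1.12} and to the monomorphism criterion of Theorem \ref{tt} without carrying out the computation); to make Table 8 genuinely exhaustive you would have to do that case explicitly, e.g.\ by computing $\mathcal{M}(L)$ via the exterior-square presentation used in the proof of Theorem \ref{t} for each maximal-class shape $(m|n)$ with $m+n=6$, $n\ge 1$, or else prove a bound that pushes $s(L)$ above $10$ there. Apart from this, your bookkeeping (handling $n=0$ by the Lie-algebra result of \cite{15}, and noting that only $\dim\mathcal{M}(L)$ and $n$, not the grading of $\mathcal{M}(L)$, enter $s(L)$) matches what the paper does implicitly.
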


\begin{tabular}{ll}
{Table $8$}
\\
\hline
$s(L)$&$\text{Name}$\\
\hline
$1$&$\text{There is no LS}$\\
\hline
$2$&$L_{1,2}^{(1)}, L_{1,2}^{(2)}, L_{1,2}^{(3)}$\\
\hline
$3$&$\text{There is no LS}$\\
\hline
$4$&$L_{1,3}^{(5)}, L_{2,2}^{(9)}$\\
\hline
$5$&$L_{2,2}^{(10)}, L_{2,2}^{(11)}, L_{2,2}^{(12)}, E^{22},$\\
\hline
$6$&$3A_{1,1}+2A$\\
\hline
$7$&$(D^{15}+A_{1,1})^3$\\

\hline
$8$&$(D^{15}+A_{1,1})^2, (D^{15}+A_{1,1})^4$\\

\hline
$9$&$(D^{15}+A_{1,1})^1$\\
\hline
$10$&$\text{There is no such Lie superalgebra}$\\
\hline

\end{tabular}
\\
\\
\begin{proof}
Utilizing Theorem \ref{t1.16}, if $s(L)=1$ then $m+n=3$ and $\dim{\mathcal{M}(L)}=3$, so by examining Table $1$, there is no such Lie superalgebra with these properties.
Let $s(L)=$2, as indicated in Tables $1, 2$, and $3$ if $m+n=3$, then we should have $L \cong L_{1,2}^{(1)}$, $L_{1,2}^{(2)}$, and $L \cong L_{1,2}^{(3)}$. Additionally when $m+n=4$ exists no Lie superalgebra that satisfies our conditions.

Given that $s(L)=3$, examining Tables $1, 2, 3$, we observe that for $m+n=3$ or $4$, there is no such Lie superalgebra.
If $s(L)=4$, then $m+n=4$. By referring to Tables $2$ and $3$, we have
$L\cong L_{1,3}^{(5)}$ or $L_{2,2}^{(9)}$.
For $s(L)\leq 5$, we have a classification of non-abelian nilpotent Lie superalgebras of maximal class and dimension at most $5$. Similarly, for $5\leq s(L)\leq 10$, we have a classification of non-abelian nilpotent Lie superalgebras of maximal class and dimension at most $5$. We present the summary of the results in Table $8$.
\end{proof}

At present, our objective is to elucidate the structural characteristics of all  $(m|n)$-dimensional nilpotent Lie superalgebras $L$ when $\dim L^2 = \dim \mathcal{M}(L) = m+n - 2$ for all $m+n \leq 5$.

\begin{proposition} \label{p1}
Let $L$ be an $(m|n)$-dimensional nilpotent Lie superalgebra such that $\dim L^2 = \dim \mathcal{M}(L) = m+n - 2$ and $m+n \leq 5$. Then $L$ is isomorphic to one of
the nilpotent Lie superalgebras $L_{2,2}^{(9)}$, $3A_{1,1}+2A$ or $(D^{15}+A_{1,1})^3$.
\end{proposition}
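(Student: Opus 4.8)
The plan is to reduce the statement to a finite comparison between the classification recalled in Tables~1--6 and the Schur-multiplier computations of Theorem~\ref{t}. First I would observe that the equality $\dim L^2=m+n-2$ together with $m+n\le 5$ leaves only the possibilities $m+n=3,4,5$: if $m+n\le 2$ it would make $L$ abelian with $\dim\mathcal M(L)=\dim L^2=0$, which is impossible for $\dim L>1$ by Theorem~\ref{t1.14} (and $m+n\le 1$ would force $\dim L^2<0$). Hence $L$ is a non-abelian nilpotent Lie superalgebra of maximal class with $\dim L^2\in\{1,2,3\}$, and by the Backhouse--Matiadou classification of the nilpotent Lie superalgebras of dimension at most $5$ that are not Lie algebras, $L$ is isomorphic to one of the finitely many superalgebras appearing in Tables~1--6.

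It then remains to scan that finite list, reading off $\dim\mathcal M(L)$ from Table~7 and testing the equality $\dim\mathcal M(L)=m+n-2$. The three superalgebras that pass are $L_{2,2}^{(9)}$, where $m+n-2=2$ and $\mathcal M(L)\cong A(1|1)$; $3A_{1,1}+2A$, where $m+n-2=3$ and $\mathcal M(L)\cong A(1|2)$; and $(D^{15}+A_{1,1})^3$, where $m+n-2=3$ and $\mathcal M(L)\cong A(2|1)$. Every remaining entry fails: for $L_{1,2}^{(1)},L_{1,2}^{(2)},L_{1,2}^{(3)}$ one has $m+n-2=1$ but $\dim\mathcal M(L)=2$; for $L_{1,3}^{(5)}$ and $E^{22}$ the multiplier strictly exceeds $m+n-2$; and for $L_{2,2}^{(10)},L_{2,2}^{(11)},L_{2,2}^{(12)},(D^{15}+A_{1,1})^1,(D^{15}+A_{1,1})^2,(D^{15}+A_{1,1})^4$ it is strictly smaller. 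Since this exhausts Tables~1--6, the three displayed superalgebras are precisely those satisfying the hypothesis.

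The argument carries no delicate estimate --- once Tables~1--6 and Table~7 are in hand it is a bookkeeping check. Accordingly, the only place where care is genuinely needed, and the real content of the proposition, lies in the two input lists being complete and exact: that Tables~1--6 truly enumerate every non-Lie-algebra nilpotent Lie superalgebra of maximal class up to dimension~$5$, and that the Schur-multiplier dimensions proved in Theorem~\ref{t} are correct. Both have already been established above, so the proof consists in quoting them and matching the numbers $\dim\mathcal M(L)$ against $m+n-2$.
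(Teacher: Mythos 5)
Your proposal is correct and follows essentially the same route as the paper: restrict to the finite Backhouse--Matiadou list of maximal-class superalgebras in Tables~1--6 and match $\dim\mathcal M(L)$ from Theorem~\ref{t} against $m+n-2$, which singles out exactly $L_{2,2}^{(9)}$, $3A_{1,1}+2A$ and $(D^{15}+A_{1,1})^3$. Your version is in fact slightly more explicit than the paper's (which only records the three successful multiplier computations), since you also verify that every other entry fails the equality.
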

\begin{proof}
By examining the classification of all nilpotent Lie superalgebras of dimension at most five presented in \cite{1,7'}, we observe that only Lie superalgebras whose derived superalgebra possesses dimension $m+n-2$ are included in Tables $1$ to $6$. Utilizing a similar methodology as employed in the proof of Theorem \ref{t}, 
we have $\dim{\mathcal{M}(L_{2,2}^{(9)})}=2$, $\dim{\mathcal{M}(3A_{1,1}+2A)}=3$ and $\dim{\mathcal{M}((D^{15}+A_{1,1})^3)}=3$. Hence $L$ should be isomorphic to one of the nilpotent Lie superalges $L_{2,2}^{(9)}$, $3A_{1,1}+2A$ or $(D^{15}+A_{1,1})^3$.
\end{proof}

\begin{proposition} Let $L$ be an $(m|n)$-dimensional nilpotent Lie algebra of maximal class. If $\dim \mathcal{M}(L) = \dim L^2$, then $L$ is capable.

\end{proposition}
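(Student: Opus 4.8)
The plan is to reduce to the algebras already classified and then settle capability one algebra at a time. By Proposition~\ref{p1}, in the range $m+n\le 5$ relevant here the hypotheses $\dim\mathcal{M}(L)=\dim L^{2}=m+n-2$ force $L$ to be isomorphic to $L_{2,2}^{(9)}$, $3A_{1,1}+2A$, or $(D^{15}+A_{1,1})^{3}$; by Lemma~4.3 of~\cite{15} it therefore suffices to prove $Z^{*}(L)=\{0\}$ for each of these three Lie superalgebras.

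I would argue by contradiction. Suppose $Z^{*}(L)\neq\{0\}$. Since $L$ is nilpotent, $Z^{*}(L)$ contains a minimal graded ideal $N$ of $L$, and such an ideal is automatically $1$-dimensional and central: from $[N,L]\subseteq N$ and minimality either $[N,L]=N$, which is impossible because iterating the bracket with $L$ would then never terminate, contradicting nilpotency, or $[N,L]=\{0\}$, in which case any line spanned by a homogeneous element of $N$ is a graded ideal contained in $N$, so $N$ itself is $1$-dimensional. For each of the three algebras one checks that $Z(L)\subseteq L^{2}$ --- indeed $Z(L)=L^{2}$ for $L_{2,2}^{(9)}$ and for $3A_{1,1}+2A$, while $Z\!\left((D^{15}+A_{1,1})^{3}\right)=\langle b\rangle\subseteq L^{2}$ --- so $N\subseteq L^{2}$ and $N\cap L^{2}=N$ has dimension $1$. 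Theorem~\ref{tt} now gives $\mathcal{M}(L)\hookrightarrow\mathcal{M}(L/N)$ together with $\mathcal{M}(L/N)/\mathcal{M}(L)\cong N\cap L^{2}$, whence
$$\dim\mathcal{M}(L/N)=\dim\mathcal{M}(L)+\dim(N\cap L^{2})=(m+n-2)+1=m+n-1 .$$
The contradiction will be produced by showing that this equality never holds.

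To that end I would run through all $1$-dimensional central graded ideals $N$ of each of the three algebras, identify the quotient $L/N$ among the nilpotent Lie superalgebras of dimension at most $5$ classified in~\cite{1,7'}, and determine $\dim\mathcal{M}(L/N)$ --- either by reading it off from Table~$7$ when $L/N$ is one of the named algebras, or by the wedge-product computation used in the proof of Theorem~\ref{t}. Concretely, every quotient of $L_{2,2}^{(9)}$ is isomorphic to $L_{1,2}^{(1)}$, to $L_{1,2}^{(2)}$, or to a decomposable $(1|2)$-dimensional Lie superalgebra, and in each of these cases $\dim\mathcal{M}(L/N)=2$; every quotient of $3A_{1,1}+2A$ is one of $L_{2,2}^{(9)},\dots,L_{2,2}^{(12)}$, so $\dim\mathcal{M}(L/N)\le 2$; and $(D^{15}+A_{1,1})^{3}$ has $\langle b\rangle$ as its only $1$-dimensional central graded ideal, with $(D^{15}+A_{1,1})^{3}/\langle b\rangle\cong L_{1,3}^{(5)}$, of $\dim\mathcal{M}=3$. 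In every case $\dim\mathcal{M}(L/N)\le m+n-2<m+n-1$, which contradicts the displayed equality. Hence no minimal graded ideal of $L$ lies in $Z^{*}(L)$, so $Z^{*}(L)=\{0\}$ and $L$ is capable.

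The only step that is not purely formal --- and hence the main obstacle --- is the last one: identifying each quotient $L/N$ up to isomorphism, which needs the classification of~\cite{1,7'} together with a little change-of-basis bookkeeping on the odd generators (rescaling them to normalise the quadratic relations), and then computing $\dim\mathcal{M}(L/N)$ with the correct $\mathbb{Z}_{2}$-grading conventions for $L\wedge L$, just as in the proof of Theorem~\ref{t}. A formally different alternative would be to exhibit, for each of the three algebras, an explicit Lie superalgebra $H$ with $L\cong H/Z(H)$ and conclude from the definition of capability; but verifying the graded Jacobi identity for such an $H$, and that $Z(H)$ is exactly the added part, seems to cost at least as much.
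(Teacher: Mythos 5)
Your argument is correct, and its skeleton is the paper's own: reduce via Proposition~\ref{p1} to the three algebras $L_{2,2}^{(9)}$, $3A_{1,1}+2A$, $(D^{15}+A_{1,1})^{3}$, suppose $Z^{*}(L)\neq\{0\}$, pass to a one-dimensional central graded ideal $N\subseteq Z^{*}(L)$, and contradict Theorem~\ref{tt} by computing the multiplier of $L/N$ from the classification. The genuine difference is which part of Theorem~\ref{tt} does the work. The paper's only written-out case, $3A_{1,1}+2A$, uses the injectivity criterion (iii) together with a crude dimension comparison ($\dim\mathcal{M}(L)=3$ versus quotient multipliers of dimension at most $2$) and then declares the other two algebras ``similar''; but that bare comparison is inconclusive precisely for those other two algebras, since $\dim\mathcal{M}(L_{2,2}^{(9)})=2$ while its $(1|2)$-dimensional quotients also have $2$-dimensional multipliers, and $\dim\mathcal{M}((D^{15}+A_{1,1})^{3})=3$ while $(D^{15}+A_{1,1})^{3}/\langle b\rangle\cong L_{1,3}^{(5)}$ also has multiplier of dimension $3$. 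Your use of the exact formula from part (i), $\dim\mathcal{M}(L/N)=\dim\mathcal{M}(L)+\dim(N\cap L^{2})=m+n-1$ (legitimate because $N\subseteq Z(L)\subseteq L^{2}$ in all three algebras), is exactly what closes those cases, so your write-up is in fact more complete than the printed proof. Two further points in your favour: extracting a minimal graded ideal shows directly that $N$ is one-dimensional and central, so you never need $Z^{*}(L)\subseteq Z(L)$, and since the centers of all three algebras are purely even you avoid the paper's rather murky ``odd element'' case for $3A_{1,1}+2A$; and the one quotient not covered by Table~$7$, namely $L_{2,2}^{(9)}/\langle a\rangle$, the direct sum of the $(1|1)$-dimensional Heisenberg superalgebra with $A(0|1)$, does have multiplier of dimension $2$ (the Heisenberg summand is free on one odd generator, so its multiplier vanishes, and the remaining Künneth-type summands contribute $1+1$), confirming that no quotient attains dimension $m+n-1$. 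The isomorphism-type bookkeeping you flag as the main obstacle is exactly the ``easily and similarly seen'' step the paper leaves implicit, so nothing essential is missing from your proposal.
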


\begin{proof}
According to Proposition \ref{p1}, $L$ is isomorphic to one of the Lie superalgebras $L_{2,2}^{(9)}$, $3A_{1,1}+2A$, and $(D^{15}+A_{1,1})^3$. Now, let us assume that $L\cong 3A_{1,1}+2A$ is non capable. By applying Theorem \ref{tt}, we can find a non-zero element $x\in Z^*(L)$. We consider two cases:

Case $1$: If $x$ is an even element of $Z^*(L)$, then according to Table $2$, $L$ is isomorphic to one of the $(2|2)$-dimensional Lie superalgebras $L_{2,2}^{(9)}$, $L_{2,2}^{(10)}$, $L_{2,2}^{(11)}$, or $L_{2,2}^{(12)}$. In these cases, the dimension of their Schur multipliers is either $1$ or $2$. Therefore, the map $\mathcal{M}(L)\to \mathcal{M}(\frac{L}{<x>})$ is not injective, which contradicts part $(iii)$ of Theorem \ref{tt}. Hence, in this case, $L$ is capable.

Case $2$: If $x$ is an odd element of $Z^*(L)$, then by Black house’s classification in \cite{1}, $L\cong L_{3,1}^{(1)}=<a,b,c,\alpha \ | \ [b,c]=a , [b,\alpha ]=\alpha>$ and similar to Theorem \ref{t}, it can be demonstrated that $\dim \mathcal{M}(L_{1,3}^{(1)})=2$. Consequently, the map $\mathcal{M}(L)\to \mathcal{M}(\frac{L}{<x>})$ is not injective, leading to a contradiction. Hence, in this scenario, $L$ possesses the capability. It is also readily apparent that  $L_{2,2}^{(9)}$ and $(D^{15}+A_{1,1})^3$ are capable.

\end{proof}

\end{document}